\newtheorem{theorem}{Theorem}[section]
\newtheorem{lemma}[theorem]{Lemma}
\newtheorem{corollary}[theorem]{Corollary}
\theoremstyle{definition}
\theoremstyle{remark}
\numberwithin{equation}{section}
\newcommand{\bea}{\begin{eqnarray}}
\newcommand{\eea}{\end{eqnarray}}
\newcommand{\ba}{\begin{array}}
\newcommand{\ea}{\end{array}}
\begin{document}
\title[On Some Geometric And Topological Properties of Sequence Spaces]{On
Some Geometric Properties Of\\
Sequence Space Defined By\\
de la Vall\'{e}e-Poussin Mean}
\author{NEC\.{I}P \c{S}\.{I}M\c{S}EK}
\address{\.{I}STANBUL COMMERCE UNIVERSITY, Department of Mathematics, \"{U}sk%
\"{u}dar, \.{I}stanbul, TURKEY.}
\email{necsimsek@yahoo.com}
\subjclass[2000]{46A45, 46B20, 46B45}
\keywords{de la Vall\'{e}e-Poussin, Ces\'{a}ro sequence spaces, H-property,
uniform Opial property, k-NUC property, geometrical properties.}

\begin{abstract}
In this work, we investigate $k-$ nearly uniform convex $\left( k-NUC\right) 
$ \ and the uniform Opial properties of the sequence space defined by de la
Vall\'{e}e-Poussin mean. Also we give some corollaries concerning the
geometrical properties of this space.
\end{abstract}

\maketitle

\section{Introduction}

In summability theory, de la Vall\'{e}e-Poussin's mean is first used to
define the $(V,\lambda )$-summability by Leindler \cite{Leindler-1965}.
Malkowsky and Sava\c{s} \cite{MalSav-2000} introduced and studied some
sequence spaces which arise from the notion of generalized de la Vall\'{e}%
e-Poussin mean. Also the $(V,\lambda )$-summable sequence spaces have been
studied by many authors including \cite{Mik2006} and \cite{Savsav-2003}.

In literature, there have been many papers on the geometrical properties of
Banach spaces. Some of them are as follows: In \cite{Opial}, Opial defined
the Opial property with his name mentioned and he proved that $\ell
_{p}(1<p<\infty )$ satisfies this property but the spaces $Lp$$[0,2\pi ]$ $%
(p\neq 2,$ $1<p<\infty )$ do not. Franchetti \cite{Franchetti}\ has shown
that any infinite dimensional Banach space has an equivalent norm satisfying
the Opial property. Later, Prus \cite{Prus} has introduced and investigated
uniform Opial property for Banach spaces. In \cite{Huff}, the notion of
nearly uniform convexity for Banach spaces was introduced by Huff. It is an
infinite dimensional counterpart of the classical uniform convexity. Also
Huff \cite{Huff} proved that every nearly uniformly convex Banach space is
reflexive and it has the uniformly Kadec-Klee property. However, Kutzarova 
\cite{Kutzarova} defined and studied k-nearly uniformly convex Banach spaces.

Recently, there has been a lot of interest in investigating geometric
properties of sequence spaces. Some of the recent work on sequence spaces
and their geometrical properties is given in the sequel: Shue \cite{Shue}
first defined the Ces\'{a}ro sequence spaces with a norm. In \cite%
{Cui-Hudzik}, it is shown that the Ces\'{a}ro sequence spaces $ces_{p}$ $%
\left( 1<p<\infty \right) $ have $k-$nearly uniform convex and uniform Opial
properties. \c{S}im\c{s}ek and Karakaya \cite{Nec-Vat} studied the uniform
Opial property and some other geometric properties\ of generalized modular
spaces of Ces\'{a}ro type defined by weighted means. In addition, some
related papers on this topic can be found in \cite{Basar},\cite{Chen},\cite%
{Vatan-2007},\cite{Liu-1996},\cite{Mur-2007},\cite{Mur-Rif-Mik},\cite%
{Mus-1983} and \cite{Ek-Vat-Nec}.

Quite recently, \c{S}im\c{s}ek et al \cite{Nec-Ek-Vat} introduced a new
sequence space defined by de la Vall\'{e}e-Poussin's mean and investigated
some geometric properties as Kadec-Klee and Banach-Saks of type $p$.
Moreover, the sequence space involving de la Vall\'{e}e-Poussin's mean is
more general than Ces\textit{\'{a}}ro sequence space defined by Shue \cite%
{Shue} and investigated by Cui and Hudzik \cite{Cui-Hud}.

The main purpose of this paper is to investigate \ uniform Opial property
and $k$-nearly uniformly convex property of the sequence space defined in 
\cite{Nec-Ek-Vat}. In addition it will be given some corollaries concerning
this space.

\section{Preliminaries and Notation}

\label{sec2}

Let $(X,||\cdot ||)$ (for the brevity $X=(X,||\cdot ||)$ ) be a normed
linear space and let $B(X)$ (resp. $S(X)$ ) be the closed unit ball (resp.
unit sphere) of $X$. The space of all real sequences is denoted by $\ell
^{0} $. For any sequence $\{x_{n}\}$ in $X$, we denote by $conv(\{x_{n}\})$
the convex hull of the elements of $\{x_{n}\}$ (see \cite{Bynum}).

A Banach space $X$ is called $uniformly$ $convex$ $(UC)$ if for each $%
\varepsilon >0$, there is $\delta >0$ such that for $x,y\in S(X)$, the
inequality $\left\Vert x-y\right\Vert >\varepsilon $ implies that 
\begin{equation*}
\left\Vert \frac{1}{2}(x+y)\right\Vert <1-\delta .
\end{equation*}

Recall that for a number $\varepsilon >0$ a sequence $\{x_{n}\}$ is said to
be an $\varepsilon -seperated$ $sequence$ if 
\begin{equation*}
sep\left( \{x_{n}\}\right) =\inf \{\left\Vert x_{n}-x_{m}\right\Vert ,\text{ 
}n\neq m\}>\varepsilon .
\end{equation*}

A Banach space $X$ is said to have the $Kadec-Klee$ $property$ $(H$ $%
property)$ if every weakly convergent sequence on the unit sphere is
convergent in norm.

A Banach space $X$ is said to have the $uniform$ $Kadec-Klee$ $property$ $%
(UKK)$ if for every $\varepsilon >0$ there exists $\delta >0$ such that if $%
x $ is the weak limit of a normalized $\varepsilon $-separated sequence,
then $\left\Vert x\right\Vert <1-\delta $ (see \cite{Huff}). We have that
every (UKK) Banach space have the Kadec-Klee property.

A Banach space $X$ is said to be the $nearly$ $uniformly$ $convex$ $(NUC)$
if for every $\varepsilon >0$ there exists $\delta >0$ such that for every
sequence $\{x_{n}\}\subset B(X)$ with $sep\left( \{x_{n}\}\right)
>\varepsilon $, we have 
\begin{equation*}
conv(\{x_{n}\})\cap (1-\delta )B(X)\neq \varnothing \text{.}
\end{equation*}

Let $k\geq 2$ be an integer. A Banach space $X$ is said to be $k-nearly$ $%
uniformly$ $convex$ $(k-NUC)$ if for any $\varepsilon >0$ there exists $%
\delta >0$ such that for every sequence $\{x_{n}\}\subset B(X)$ with $%
sep\left( \{x_{n}\}\right) >\varepsilon $, there are $n_{1},n_{2},...,n_{k}%
\in \mathbb{N}$ such that 
\begin{equation*}
\left\| \frac{x_{n_{1}}+x_{n_{2}}+...+x_{n_{k}}}{k}\right\| <1-\delta \text{.%
}
\end{equation*}

Of course a Banach space $X$ is $(NUC)$ whenever it is $(k-NUC)$ for some
integer $k\geq 2$. Clearly, $(k-NUC)$ Banach spaces are $(NUC)$ but the
opposite implication does not hold in general (see \cite{Kutzarova}).

A Banach space $X$ is said to have the $Opial$ $property$ if every sequence $%
\{x_{n}\}$ weakly convergent to $x_{0}$ satisfies 
\begin{equation*}
\underset{n\rightarrow \infty }{\lim \inf }\left\Vert x_{n}\right\Vert <%
\underset{n\rightarrow \infty }{\lim \inf }\left\Vert x_{n}+x\right\Vert
\end{equation*}%
for every $x\in X$ (see \cite{Opial}).

A Banach space $X$ is said to have the $uniform$ $Opial$ $property$ if every 
$\varepsilon >0$ there exists $\delta >0$ such that for each weakly null
sequence $\{x_{n}\}\subset S(X)$ and $x\in X$ with $||x||\geq \varepsilon $,
we have (see \cite{Prus}) 
\begin{equation*}
1+\tau \leq \underset{n\rightarrow \infty }{\lim \inf }\left\Vert
x_{n}+x\right\Vert \text{.}
\end{equation*}

A point $x\in S(X)$ is called an $extreme$ $point$ if for any $y,z\in B(X)$
the equality $2x=y+z$ implies $y=z$.

A Banach space $X$ is said to be $rotund$ (abbreviated as (abbreviated as $%
(R)$) if every point of $S(X)$ is an extreme point.

A Banach space $X$ is said to be $fully$ $k-rotund$ (write $kR$) (see \cite%
{Fan-Glick}) if for every sequence $\{x_{n}\}\subset B(X)$, 
\begin{equation*}
\left\Vert x_{n_{1}}+x_{n_{2}}+...+x_{n_{k}}\right\Vert \rightarrow k\text{
\ \ as \ \ }n_{1},n_{2},...,n_{k}\rightarrow \infty
\end{equation*}

\noindent implies that $\{x_{n}\}$ is convergent.

It is well known that $(UC)$ implies $(kR)$ and $(kR)$ implies $((k+1)R)$,
and $(kR)$ spaces are reflexive and rotund, and it is easy to see that $%
(k-NUC)$ implies $(kR)$.

In this paper, we will need the following inequalities in the sequel; 
\begin{equation*}
\left\vert a_{k}+b_{k}\right\vert ^{p}\leq 2^{p-1}\left( \left\vert
a_{k}\right\vert ^{p}+\left\vert b_{k}\right\vert ^{p}\right) ,
\end{equation*}%
for $p\geq 1.$

Let $\Lambda =(\lambda _{k})$ be a nondecreasing sequence of positive real
numbers tending to infinity and let $\lambda _{1}=1$ and $\lambda _{k+1}\leq
\lambda _{k}+1$.

The generalized de la Vall\'{e}e-Poussin means of a sequence $x=\{x_{k}\}$
are defined as follows: 
\begin{equation*}
t_{k}(x)=\frac{1}{\lambda _{k}}\dsum\limits_{j\in I_{k}}x_{k}\text{ \ \ \ \
\ \ where \ \ \ }I_{k}=[k-\lambda _{k}+1,k]\text{ \ \ \ \ for \ \ }k=1,2,...%
\text{ \ \ .}
\end{equation*}

We write 
\begin{equation*}
\lbrack V,\lambda ]_{0}=\left\{ x\in \ell ^{0}:\underset{k\rightarrow \infty 
}{\lim }\frac{1}{\lambda _{k}}\sum\limits_{j\in I_{k}}|x_{j}|=0\right\}
\end{equation*}
\begin{equation*}
\lbrack V,\lambda ]=\left\{ x\in \ell ^{0}:x-le\in \lbrack V,\lambda ]_{0},%
\text{ for some }l\in \mathbb{C}\right\}
\end{equation*}
and 
\begin{equation*}
\lbrack V,\lambda ]_{\infty }=\left\{ x\in \ell ^{0}:\underset{k}{\sup }%
\frac{1}{\lambda _{k}}\sum\limits_{j\in I_{k}}|x_{j}|<\infty \right\}
\end{equation*}
for the sequence spaces that are strongly summable to zero, strongly
summable and strongly bounded by the de la Vall\'{e}e-Poussin method,
resp.(see \cite{Leindler-1965}). In the special case where $\lambda _{k}=k$
for $k=1,2,...$ the spaces $[V,\lambda ]_{0},$ $[V,\lambda ]$ and $%
[V,\lambda ]_{\infty }$ reduce to the spaces $w_{0},$ $w$ and $w_{\infty }$
introduced by Maddox \cite{Maddox-1}.

The following new paranormed sequence space defined in \cite{Nec-Ek-Vat}.

\begin{equation*}
V(\lambda ;p)=\left\{ x=(x_{j})\in \ell ^{0}:\sum\limits_{k=1}^{\infty
}\left( \frac{1}{\lambda _{k}}\sum\limits_{j\in I_{k}}|x_{j}|\right)
^{p_{k}}<\infty \right\} .
\end{equation*}
If we take $p_{k}=p$ for all $k$; the space $V(\lambda ;p)$ reduced to
normed space $V_{p}(\lambda )$ defined by

\QTP{Body Math}
\begin{equation*}
V_{p}(\lambda )=\left\{ x=(x_{j})\in \ell ^{0}:\sum\limits_{k=1}^{\infty
}\left( \frac{1}{\lambda _{k}}\sum\limits_{j\in I_{k}}|x_{j}|\right)
^{p}<\infty \right\}.
\end{equation*}
The details of the sequence spaces mentioned above can be found in \cite%
{Nec-Ek-Vat}.

\section{Main Results}

\label{sec3}

In this section we show that the space $V_{p}(\lambda )$ is $(k-NUC)$ and
have uniform Opial property. Firstly we need an important lemma.

\begin{lemma}
\label{Lem 2.1} Let $X$ $\subset V_{p}(\lambda )$. For any $\varepsilon >0$
and $L>0$, there exists $\delta >0$ such that for all $x,y\in X$, 
\begin{equation*}
\left\vert \dsum\limits_{n=1}^{\infty }\left( \frac{1}{\lambda _{n}}%
\dsum\limits_{i\in I_{n}}\left\vert x(i)+y(i)\right\vert \right)
^{p}-\dsum\limits_{n=1}^{\infty }\left( \frac{1}{\lambda _{n}}%
\dsum\limits_{i\in I_{n}}\left\vert x(i)\right\vert \right) ^{p}\right\vert
<\varepsilon \text{,}
\end{equation*}%
whenever 
\begin{equation*}
\dsum\limits_{n=1}^{\infty }\left( \frac{1}{\lambda _{n}}\dsum\limits_{i\in
I_{n}}\left\vert x(i)\right\vert \right) ^{p}<L\text{ \ \ and }%
\dsum\limits_{n=1}^{\infty }\left( \frac{1}{\lambda _{n}}\dsum\limits_{i\in
I_{n}}\left\vert y(i)\right\vert \right) ^{p}\leq \delta .
\end{equation*}
\end{lemma}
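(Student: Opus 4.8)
The plan is to reduce the statement to a claim about a single term of the series and then control the tail uniformly.  Write $\sigma_n(x)=\tfrac{1}{\lambda_n}\sum_{i\in I_n}|x(i)|$, so that the quantities in the lemma are $\sum_n\sigma_n(x+y)^p$, $\sum_n\sigma_n(x)^p$ and $\sum_n\sigma_n(y)^p$.  Note first that by the triangle inequality $|\sigma_n(x+y)-\sigma_n(x)|\le\sigma_n(y)$, and more usefully $\sigma_n(x)\le\sigma_n(x+y)+\sigma_n(y)$.  The elementary inequality quoted in the preliminaries, $|a+b|^p\le 2^{p-1}(|a|^p+|b|^p)$, then gives $\sigma_n(x+y)^p\le 2^{p-1}\sigma_n(x)^p+2^{p-1}\sigma_n(y)^p$, and symmetrically for $\sigma_n(x)^p$.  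In particular $\sum_n\sigma_n(x+y)^p\le 2^{p-1}(L+\delta)$, so both series $x$ and $x+y$ have sums bounded by a constant depending only on $L$ (for $\delta\le 1$, say).

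Next I would use the uniform continuity of $t\mapsto t^p$ on the compact interval $[0,M]$, where $M=(2^{p-1}(L+1))^{1/p}$ is a bound for all the partial de~la~Vall\'ee--Poussin averages involved.  Given $\varepsilon>0$, choose $\eta>0$ so that $|s^p-t^p|<\varepsilon'$ whenever $s,t\in[0,M]$ and $|s-t|<\eta$, where $\varepsilon'$ will be fixed below.  Since $|\sigma_n(x+y)-\sigma_n(x)|\le\sigma_n(y)$, on the set of indices $n$ with $\sigma_n(y)<\eta$ we get $|\sigma_n(x+y)^p-\sigma_n(x)^p|<\varepsilon'$.  The point is that $\sum_n\sigma_n(y)^p\le\delta$ forces $\sigma_n(y)$ to be small for all but finitely many $n$, and in fact the number of indices with $\sigma_n(y)\ge\eta$ is at most $\delta/\eta^p$; on those few ``bad'' indices we bound $|\sigma_n(x+y)^p-\sigma_n(x)^p|$ crudely by $\sigma_n(x+y)^p+\sigma_n(x)^p$.

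Assembling the estimate: split $\mathbb{N}=G\cup B$ where $G=\{n:\sigma_n(y)<\eta\}$ and $B$ its complement.  Over $G$, $\sum_{n\in G}|\sigma_n(x+y)^p-\sigma_n(x)^p|$ is bounded, but this is still an infinite sum of terms each $<\varepsilon'$, so I need a better bound there: use instead $|\sigma_n(x+y)^p-\sigma_n(x)^p|\le C_p\max(\sigma_n(x),\sigma_n(y))^{p-1}\sigma_n(y)$ (mean value theorem), and then H\"older with exponents $p/(p-1)$ and $p$ to get $\sum_{n\in G}(\cdots)\le C_p\big(\sum_n(\sigma_n(x)+\sigma_n(y))^p\big)^{(p-1)/p}\big(\sum_n\sigma_n(y)^p\big)^{1/p}\le C_p'(L+1)^{(p-1)/p}\delta^{1/p}$.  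Over $B$, which is finite, $\sum_{n\in B}(\sigma_n(x+y)^p+\sigma_n(x)^p)$; but the total over $B$ of $\sigma_n(y)^p$ is $\le\delta$, and $\sigma_n(x)^p$ over $B$ can be controlled because $B$ consists of indices where $\sigma_n(y)\ge\eta$ hence $\sigma_n(x+y)^p,\sigma_n(x)^p\le 2^{p-1}(\cdots)$ — actually the cleanest route is simply to fold everything into one H\"older estimate over all of $\mathbb{N}$: $|\sum_n\sigma_n(x+y)^p-\sum_n\sigma_n(x)^p|\le\sum_n|\sigma_n(x+y)^p-\sigma_n(x)^p|\le C_p(L+1)^{(p-1)/p}\delta^{1/p}$, and then choose $\delta$ so small that this is $<\varepsilon$.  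The main obstacle is exactly this last uniformity: one must avoid letting the infinitely many small-but-nonzero terms over $G$ add up, and the mean-value-theorem-plus-H\"older trick is what does it; the crude compactness/uniform-continuity argument alone is not enough.  I would therefore present the proof via the pointwise bound $|s^p-t^p|\le p\,\max(s,t)^{p-1}|s-t|$ followed by H\"older's inequality, which yields $\delta=\bigl(\varepsilon/(C_p(L+1)^{(p-1)/p})\bigr)^p$ explicitly.
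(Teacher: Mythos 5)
Your final argument is correct, but it takes a genuinely different route from the paper's, which is much shorter. The paper simply observes that the two series in the lemma are exactly $\|x+y\|^{p}$ and $\|x\|^{p}$ for the natural norm $\|x\|=\bigl(\sum_{n}\bigl(\tfrac{1}{\lambda _{n}}\sum_{i\in I_{n}}|x(i)|\bigr)^{p}\bigr)^{1/p}$ of $V_{p}(\lambda )$; the triangle inequality gives $\bigl|\,\|x+y\|-\|x\|\,\bigr|\leq \|y\|\leq \delta ^{1/p}$, both norms lie in $[0,L^{1/p}+1]$ once $\delta \leq 1$, and a single application of the uniform continuity of $g(t)=t^{p}$ on that compact interval finishes the proof. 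So the obstacle you emphasize (infinitely many small termwise discrepancies adding up) only arises because you compare the series block by block; once the sums are recognized as $p$-th powers of norms, the ``crude'' uniform-continuity argument is applied to two scalars and is entirely sufficient. Your route, via the pointwise bound $|s^{p}-t^{p}|\leq p\max (s,t)^{p-1}|s-t|$, the estimate $|\sigma _{n}(x+y)-\sigma _{n}(x)|\leq \sigma _{n}(y)$ for your block averages $\sigma _{n}(x)=\tfrac{1}{\lambda _{n}}\sum_{i\in I_{n}}|x(i)|$, and H\"{o}lder's inequality with exponents $p/(p-1)$ and $p$, is sound (the detour through the good/bad index decomposition should simply be deleted), and it does buy something the paper's proof does not: an explicit modulus $\delta \sim \varepsilon ^{p}$ and the stronger conclusion that $\sum_{n}|\sigma _{n}(x+y)^{p}-\sigma _{n}(x)^{p}|$, not merely the difference of the two sums, is small. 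The cost is extra machinery where a two-line norm argument suffices.
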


\begin{proof}
(See \cite{Cui-Hudzik}) Let $||.||$ denote the norm in $V_{p}(\lambda )$.
Then $\left\| y\right\| ^{p}<\delta $ implies 
\begin{equation*}
{\LARGE |}\text{ }||x+y||-\left\| x\right\| \text{ }{\LARGE |}\leq \left\|
(x+y)-x\right\| =\left\| y\right\| \leq \delta ^{\frac{1}{p}}.
\end{equation*}

Since the function $g(t)=t^{p}$ is uniformly continuous on the interval $%
\left[ 0,L^{\frac{1}{p}}+1\right] $, we get the assertion of the lemma.
\end{proof}

\begin{theorem}
\label{Thm 2.2} The space $V_{p}(\lambda )$ is $(k-NUC)$ for any integer $%
k\geq 2$ where $(1<p<\infty ).$
\end{theorem}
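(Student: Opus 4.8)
The plan is to follow the standard scheme used for Cesàro-type spaces (as in Cui–Hudzik \cite{Cui-Hudzik}), adapting the blocking argument to the de la Vallée-Poussin averages. First I would fix $\varepsilon>0$ and take any sequence $\{x_n\}\subset B(V_p(\lambda))$ with $\mathrm{sep}(\{x_n\})>\varepsilon$. The key idea is a \emph{gliding-hump / splitting} argument: decompose each $x_n$ as a ``head'' part supported on finitely many coordinates plus a ``tail'' part whose $V_p(\lambda)$-modulus is arbitrarily small. Concretely, I would write $x_n = u_n + v_n$ where $u_n = x_n\cdot\chi_{\{1,\dots,M\}}$ for a cutoff $M$ to be chosen, and $v_n = x_n - u_n$. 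Since $\sum_{k=1}^\infty\big(\tfrac{1}{\lambda_k}\sum_{i\in I_k}|x_n(i)|\big)^p\le 1$ uniformly, by passing to a subsequence one can assume the heads $u_n$ converge coordinatewise, and then arrange (diagonal argument) that the ``essential'' mass separating the $x_n$ lives in the tails: one gets a subsequence along which $\sum_{k>M}\big(\tfrac1{\lambda_k}\sum_{i\in I_k}|x_n(i)|\big)^p$ is at least (roughly) $(\varepsilon/2)^p$ for each large $n$, while simultaneously the overlap of the supports of distinct tails can be made negligible.

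Next I would use the almost-disjoint support structure together with the convexity/superadditivity of $t\mapsto t^p$ for $p>1$ to estimate $\big\|\tfrac{x_{n_1}+\dots+x_{n_k}}{k}\big\|^p$. Picking indices $n_1<\dots<n_k$ so that the tails $v_{n_1},\dots,v_{n_k}$ are supported on successive, essentially disjoint blocks of integers, the block averages $\tfrac1{\lambda_k}\sum_{i\in I_k}$ of $\tfrac1k|x_{n_1}+\dots+x_{n_k}|$ split (up to a small error controlled by Lemma~\ref{Lem 2.1}) into a part coming from the common head, which contributes at most the average of the $\|u_{n_j}\|^p$, and parts coming from each individual tail, each contributing $\tfrac1{k^p}\|v_{n_j}\|^p$. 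Since $k^p > k$ for $p>1$, summing gives a strict gain: the $\tfrac1{k^p}$ coefficient on the separated tail mass is strictly smaller than the $\tfrac1k$ one would need just to stay on the unit ball, and this produces the required $\delta=\delta(\varepsilon,p,k)>0$ with $\big\|\tfrac{x_{n_1}+\dots+x_{n_k}}{k}\big\|<1-\delta$.

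The main obstacle — and the place I would spend the most care — is the bookkeeping in the splitting step. Unlike the classical Cesàro space, here the $k$-th average involves the interval $I_k=[k-\lambda_k+1,k]$, whose length $\lambda_k$ grows, so the blocks $I_k$ overlap heavily and a single coordinate $x_n(i)$ influences many averages $t_k(x_n)$. Consequently one cannot simply make the \emph{supports} of the $v_{n_j}$ disjoint in the coordinate index; one has to make the ranges of $k$ for which $\tfrac1{\lambda_k}\sum_{i\in I_k}|v_{n_j}(i)|$ is non-negligible essentially disjoint, which requires choosing the cutoffs $M_1\ll M_2\ll\dots$ far enough apart that the ``spillover'' of $v_{n_j}$ into the block-range of $v_{n_{j'}}$ is absorbed by the error term $\varepsilon$ of Lemma~\ref{Lem 2.1}. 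Handling this cleanly is exactly what Lemma~\ref{Lem 2.1} was stated for: it lets me discard small-modulus pieces from inside the $p$-th power sum at the cost of a controllable error, so the strict inequality survives.

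Finally, I would assemble the estimate: choose $\varepsilon'$ (the Lemma's tolerance) and the cutoffs so that the head contributes at most $\max_j\|u_{n_j}\|^p + \varepsilon'$, average it against the $k$ tails each contributing $\ge (\varepsilon/2)^p/k^p$ worth of genuine, non-overlapping mass with cross terms $\le\varepsilon'$, and conclude
\begin{equation*}
\left\|\frac{x_{n_1}+\dots+x_{n_k}}{k}\right\|^p \le 1-(k-1)\frac{(\varepsilon/2)^p}{k^p}+C\varepsilon' =: (1-\delta)^p,
\end{equation*}
which gives $(k-NUC)$. As a remark, since $(k-NUC)$ implies reflexivity, the Kadec–Klee property, and $(kR)$ (all noted in the Preliminaries), the usual corollaries about $V_p(\lambda)$ follow at once.
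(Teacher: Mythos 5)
Your overall scheme --- diagonal subsequence, separation forcing tail modular at least $(\varepsilon/2)^p$, blocking of the outer index $n$ combined with Lemma \ref{Lem 2.1} and convexity of $t\mapsto t^{p}$, and the gain coming from a $1/k^{p}$ coefficient where membership in the unit ball only requires $1/k$ --- is the same Cui--Hudzik-style argument the paper uses. The gap is in the specific arrangement you rely on: you assert that one can pick $n_{1}<\dots<n_{k}$ whose tails occupy essentially disjoint ranges of the outer index, and that this follows from ``choosing the cutoffs $M_{1}\ll M_{2}\ll\dots$ far enough apart.'' Spacing the cutoffs only controls the \emph{forward} spillover, namely that the tail of an already chosen $x_{n_{j}}$ has small modular beyond the next cutoff (backward spillover of a genuine tail is not even an issue, since $I_{n}\subseteq[1,n]$, so a vector vanishing on the first $M$ coordinates has zero block averages for $n\le M$). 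What it does not control is the mass that the \emph{later} chosen elements $x_{n_{j'}}$, $j'>j$, carry in the range $(M_{j},M_{j+1}]$ designated for $x_{n_{j}}$: that mass belongs to their heads, which are merely bounded by $1$, not small, so on that range you cannot discard them via Lemma \ref{Lem 2.1}; you can only keep them under plain convexity with coefficient $1/k$ --- and then $x_{n_{j}}$ also enters with coefficient $1/k$ there and produces no gain. Hence the claimed estimate $1-(k-1)(\varepsilon/2)^{p}/k^{p}+C\varepsilon'$, with a contribution from each of the $k$ tails, is not justified as written.

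Two ways to repair it. (i) The paper's arrangement: choose the first $k-1$ elements so that their tails beyond successive cutoffs $m_{1}<\dots<m_{k-1}$ have modular at most $\delta$, and a single last element $x_{n_{k}}$ with tail modular at least $(\varepsilon/2)^{p}$ beyond $m_{k-1}$; on the intermediate ranges keep all not-yet-discarded elements under the $1/k$ convexity bound and extract the $1/k^{p}$-versus-$1/k$ gain only on the final range, where $x_{n_{k}}$ stands alone. With $\varepsilon_{1}$ chosen suitably this gives $I_{p}\bigl(\tfrac{x_{n_{1}}+\dots+x_{n_{k}}}{k}\bigr)\le 1-\tfrac{1}{2}\,\tfrac{k^{p-1}-1}{k^{p}}\,(\varepsilon/2)^{p}$, which suffices. (ii) If you insist on $k$ disjoint humps, you need an extra ingredient your sketch omits: let $x$ be the coordinatewise limit of the subsequence (Fatou puts it in $B(V_{p}(\lambda))$), fix the first cutoff so that the tail modular of $x$ is small, and when selecting each $n_{j}$ additionally require, via coordinatewise convergence on the finitely many coordinates up to the current cutoff, that $x_{n_{j}}$ be close to $x$ there, so that its mass on all earlier designated ranges is small. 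Either route completes the proof; as written, the essential-disjointness claim is the missing step.
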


\begin{proof}
Let $\varepsilon >0$ and $(x_{n})\subset B(V_{p}(\lambda ))$ with $sep\left(
\{x_{n}\}\right) >\varepsilon .$ Let $x_{n}^{m}=(0,0,...,x_{n}(m),$ $%
x_{n}(m+1),...)$ for each $m\in \mathbb{N}$. Since for each $i\in \mathbb{N}$%
, $\{x_{n}(i)\}_{i=1}^{\infty }$ is bounded therefore using the diagonal
method one can find a subsequence $\{x_{n_{k}}\}$ of $\{x_{n}\}$ such that
the sequence $\{x_{n_{k}}(i)\}$ converges for each $i\in \mathbb{N}$.

Therefore, there exists an increasing sequence of positive integer $(k_{m})$
such that $sep(\{x_{n_{k}}^{m}\}_{k>k_{m}})\geq \varepsilon $. Hence there
is a sequence of positive integers $(n_{m})_{m=1}^{\infty }$ with $%
n_{1}<n_{2}<n_{3}<...$ \ such that%
\begin{equation}
\left\Vert x_{n_{m}}^{m}\right\Vert \geq \frac{\varepsilon }{2}  \label{3.1}
\end{equation}
for all $m\in \mathbb{N}$.

Write $I_{p}(x)=\dsum\limits_{n=1}^{\infty }\left( \frac{1}{\lambda _{n}}%
\dsum\limits_{i\in I_{n}}\left\vert x(i)\right\vert \right) ^{p}$ and put $%
\varepsilon _{1}=\frac{k^{p-1}-1}{2k^{p}(k-1)}(\frac{\varepsilon }{2})^{p}$.
Then by Lemma \ref{Lem 2.1}, there exists $\delta >0$ such that%
\begin{equation}
\left\vert I_{p}(x+y)-I_{p}(x)\right\vert <\varepsilon _{1}  \label{3.2}
\end{equation}
whenever $I_{p}(x)\leq 1$ and $I_{p}(y)\leq \delta $ (see\cite{Cui-Pluc}).

There exists $m_{1}\in \mathbb{N}$ such that $I_{p}(x_{1}^{m_{1}})\leq
\delta $. Next there exists $m_{2}>m_{1}$ such that $I_{p}(x_{2}^{m_{2}})%
\leq \delta $. In such a way, there exists $m_{2}<m_{3}<...<m_{k-1}$ such
that $I_{p}(x_{j}^{m_{j}})\leq \delta $ for all $j=1,2,...,k-1$. Define $%
m_{k}=m_{k-1}+1$. By condition (\ref{3.1}), there exists $n_{k}\in \mathbb{N}
$ such that $I_{p}(x_{n_{k}}^{m_{k}})\geq \left( \frac{\varepsilon }{2}%
\right) ^{p}$. Put $n_{i}=i$ for $1\leq i\leq k-1$. Then in virtue of (\ref%
{3.1}), (\ref{3.2}) and convexity of the function $f(u)=|u|^{p}$, we get%
\begin{equation*}
I_{p}(\frac{x_{n_{1}}+x_{n_{2}}+...+x_{n_{k}}}{k})=\dsum%
\limits_{n=1}^{m_{1}}\left( \frac{1}{\lambda _{n}}\dsum\limits_{i\in
I_{n}}\left\vert \frac{x_{n_{1}}(i)+x_{n_{2}}(i)+...+x_{n_{k}}(i)}{k}%
\right\vert \right) ^{p}+\text{ \ \ \ \ \ \ \ \ \ \ \ \ \ \ \ \ \ \ \ \ \ \
\ \ \ }
\end{equation*}%
\begin{eqnarray*}
&&\text{ \ \ \ \ \ \ }+\dsum\limits_{n=m_{1}+1}^{\infty }\left( \frac{1}{%
\lambda _{n}}\dsum\limits_{i\in I_{n}}\left\vert \frac{%
x_{n_{1}}(i)+x_{n_{2}}(i)+...+x_{n_{k}}(i)}{k}\right\vert \right) ^{p} \\
\text{ \ \ \ \ \ \ \ \ \ } &\leq &\dsum\limits_{n=1}^{m_{1}}\frac{1}{k}%
\dsum\limits_{j=1}^{k}\left( \frac{1}{\lambda _{n}}\dsum\limits_{i\in
I_{n}}\left\vert x_{n_{j}}(i)\right\vert \right) ^{p} \\
&&\text{ \ \ }+\dsum\limits_{n=m_{1}+1}^{\infty }\left( \frac{1}{\lambda _{n}%
}\dsum\limits_{i\in I_{n}}\left\vert \frac{%
x_{n_{2}}(i)+x_{n_{3}}(i)+...+x_{n_{k}}(i)}{k}\right\vert \right)
^{p}+\varepsilon _{1} \\
&=&\dsum\limits_{n=1}^{m_{1}}\frac{1}{k}\dsum\limits_{j=1}^{k}\left( \frac{1%
}{\lambda _{n}}\dsum\limits_{i\in I_{n}}\left\vert x_{n_{j}}(i)\right\vert
\right) ^{p}+\dsum\limits_{n=m_{1}+1}^{m_{2}}\left( \frac{1}{\lambda _{n}}%
\dsum\limits_{i\in I_{n}}\left\vert \frac{%
x_{n_{2}}(i)+x_{n_{3}}(i)+...+x_{n_{k}}(i)}{k}\right\vert \right) ^{p}+ \\
&&+\dsum\limits_{n=m_{2}+1}^{m_{3}}\left( \frac{1}{\lambda _{n}}%
\dsum\limits_{i\in I_{n}}\left\vert \frac{%
x_{n_{2}}(i)+x_{n_{3}}(i)+...+x_{n_{k-1}}(i)+x_{n_{k}}(i)}{k}\right\vert
\right) ^{p}+\varepsilon _{1}
\end{eqnarray*}%
\begin{eqnarray*}
&& \\
&\leq &\dsum\limits_{n=1}^{m_{1}}\frac{1}{k}\dsum\limits_{j=1}^{k}\left( 
\frac{1}{\lambda _{n}}\dsum\limits_{i\in I_{n}}\left\vert
x_{n_{j}}(i)\right\vert \right) ^{p}+\dsum\limits_{n=m_{1}+1}^{m_{2}}\frac{1%
}{k}\dsum\limits_{j=2}^{k}\left( \frac{1}{\lambda _{n}}\dsum\limits_{i\in
I_{n}}\left\vert x_{n_{j}}(i)\right\vert \right) ^{p}+ \\
&&+\dsum\limits_{n=m_{2}+1}^{\infty }\left( \frac{1}{\lambda _{n}}%
\dsum\limits_{i\in I_{n}}\left\vert \frac{%
x_{n_{3}}(i)+x_{n_{4}}(i)+...+x_{n_{k-1}}(i)+x_{n_{k}}(i)}{k}\right\vert
\right) ^{p}+2\varepsilon _{1} \\
&&\cdots \\
&\leq &\frac{I_{p}\left( x_{n_{1}}\right) +...+I_{p}\left(
x_{n_{k-1}}\right) }{k}+\frac{1}{k}\dsum\limits_{n=1}^{m_{k-1}}\left( \frac{1%
}{\lambda _{n}}\dsum\limits_{i\in I_{n}}\left\vert x_{n_{k}}(i)\right\vert
\right) ^{p}+ \\
&&+\dsum\limits_{n=m_{k-1}+1}^{\infty }\left( \frac{1}{\lambda _{n}}\left(
\dsum\limits_{i\in I_{n}}\left\vert \frac{x_{n_{k}}(i)}{k}\right\vert
\right) \right) ^{p}+(k-1)\varepsilon _{1} \\
&\leq &\frac{k-1}{k}+\frac{1}{k}\dsum\limits_{n=1}^{m_{k-1}}\left( \frac{1}{%
\lambda _{n}}\dsum\limits_{i\in I_{n}}\left\vert x_{n_{k}}(i)\right\vert
\right) ^{p}+\frac{1}{k^{p}}\dsum\limits_{n=m_{k-1}+1}^{\infty }\left( \frac{%
1}{\lambda _{n}}\dsum\limits_{i\in I_{n}}\left\vert x_{n_{k}}(i)\right\vert
\right) ^{p}+(k-1)\varepsilon _{1} \\
&=&1+\frac{1}{k}\left( 1-\dsum\limits_{n=m_{k-1}+1}^{\infty }\left( \frac{1}{%
\lambda _{n}}\dsum\limits_{i\in I_{n}}\left\vert x_{n_{k}}(i)\right\vert
\right) ^{p}\right) +\frac{1}{k^{p}}\dsum\limits_{n=m_{k-1}+1}^{\infty
}\left( \frac{1}{\lambda _{n}}\dsum\limits_{i\in I_{n}}\left\vert
x_{n_{k}}(i)\right\vert \right) ^{p}+(k-1)\varepsilon _{1} \\
&\leq &1+(k-1)\varepsilon _{1}-\left( \frac{k^{p-1}-1}{k^{p}}\right)
\dsum\limits_{n=m_{k-1}+1}^{\infty }\left( \frac{1}{\lambda _{n}}%
\dsum\limits_{i\in I_{n}}\left\vert x_{n_{k}}(i)\right\vert \right) ^{p} \\
&\leq &1+(k-1)\varepsilon _{1}-\left( \frac{k^{p-1}-1}{k^{p}}\right) \left( 
\frac{\varepsilon }{2}\right) ^{p} \\
&=&1-\frac{1}{2}\left( \frac{k^{p-1}-1}{k^{p}}\right) \left( \frac{%
\varepsilon }{2}\right) ^{p}
\end{eqnarray*}%
Under the condition (\ref{3.1}), $V_{p}(\lambda )$ is $(k-NUC)$ for any
integer $k\geq 2$.
\end{proof}

\begin{theorem}
\label{Thm 2.3} For any $(1<p<\infty )$, the space $V_{p}(\lambda )$ has the
uniform Opial property.
\end{theorem}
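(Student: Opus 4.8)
The plan is to follow the standard strategy for establishing the uniform Opial property in Cesàro-type sequence spaces (as in Cui–Hudzik for $ces_p$), adapting it to the de la Vallée–Poussin modular. Given $\varepsilon>0$, take any weakly null sequence $\{x_n\}\subset S(V_p(\lambda))$ and any $x\in V_p(\lambda)$ with $\|x\|\ge\varepsilon$. Write $I_p(z)=\sum_{n=1}^\infty\bigl(\frac{1}{\lambda_n}\sum_{i\in I_n}|z(i)|\bigr)^p$ as in the previous proof. Since $\{x_n\}$ is weakly null it converges coordinatewise to $0$, and the tail blocks of $x$ are modularly small: choose $N$ so that $\sum_{n>N}\bigl(\frac{1}{\lambda_n}\sum_{i\in I_n}|x(i)|\bigr)^p$ is as small as we like. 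The key mechanism is that, because each coordinate $x_n(i)\to 0$ and only finitely many blocks $I_1,\dots,I_N$ are involved in the ``head'', for large $n$ the head contribution of $x_n$ to $I_p(x_n+x)$ is close to the head contribution of $x_n$ alone, while the tail of $x_n+x$ essentially contains a full copy of the tail of $x$ plus the tail of $x_n$.

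The key steps, in order: (1) Fix $\varepsilon>0$; set $\delta>0$ to be chosen, and suppose $\|x\|\ge\varepsilon$, so $I_p(x)\ge\min\{\varepsilon,\varepsilon^p\}=:\eta(\varepsilon)>0$ (using that on the unit ball the modular and norm are comparably bounded). (2) Pick $N\in\mathbb N$ with $\sum_{n=1}^{N}\bigl(\frac{1}{\lambda_n}\sum_{i\in I_n}|x(i)|\bigr)^p$ within $\varepsilon_1$ of $I_p(x)$, where $\varepsilon_1$ is a small parameter to be fixed in terms of $\varepsilon$; equivalently the tail past $N$ has modular $<\varepsilon_1$. (3) Use weak nullity (coordinatewise convergence to $0$) to find $n_0$ so that for $n\ge n_0$ the restriction of $x_n$ to the first $N$ blocks has modular $<\varepsilon_1$; then apply Lemma~\ref{Lem 2.1} (with $L=2$, say) to absorb cross terms, obtaining for $n\ge n_0$
\begin{equation*}
I_p(x_n+x)\ge \sum_{n'>N}\Bigl(\tfrac{1}{\lambda_{n'}}\sum_{i\in I_{n'}}|x_n(i)|\Bigr)^p+I_p(x)-C\varepsilon_1
\end{equation*}
for a constant $C$ depending only on $k,p$ (not on $\varepsilon$); here one also uses that deleting the first $N$ blocks of $x_n$ changes its modular by less than $\varepsilon_1$, so $\sum_{n'>N}(\cdots|x_n(i)|\cdots)^p\ge I_p(x_n)-\varepsilon_1=1-\varepsilon_1$. (4) Combine: $I_p(x_n+x)\ge 1+I_p(x)-C'\varepsilon_1\ge 1+\eta(\varepsilon)-C'\varepsilon_1$, and choosing $\varepsilon_1$ small enough gives $I_p(x_n+x)\ge 1+\tfrac12\eta(\varepsilon)$ for all large $n$. (5) Convert the modular bound into a norm bound: since $t\mapsto I_p$ of a scaled vector is continuous and $I_p(z)\le 1\iff\|z\|\le 1$, there is $\tau=\tau(\varepsilon)>0$ with $I_p(z)\ge 1+\tfrac12\eta(\varepsilon)\Rightarrow\|z\|\ge 1+\tau$; hence $\liminf_n\|x_n+x\|\ge 1+\tau$, which is exactly the uniform Opial property.

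The main obstacle is step (3)–(4): making the estimate $I_p(x_n+x)\gtrsim I_p(x_n)+I_p(x)$ precise despite the overlap of the ``window'' blocks $I_n=[n-\lambda_n+1,n]$. Unlike the classical $\ell_p$ case, the de la Vallée–Poussin blocks $I_n$ overlap, so a coordinate $x(i)$ can influence several $t_n$-terms; one must check that the splitting of the index set into a head part (first $N$ blocks) and a tail part still yields the additivity up to $\varepsilon_1$ — this is where Lemma~\ref{Lem 2.1} does the real work, guaranteeing that a modularly small perturbation (the head of $x_n$ restricted to blocks $\le N$, together with the overlap effects) perturbs the whole modular by less than $\varepsilon_1$. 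A secondary technical point is the clean passage between modular and norm in steps (1) and (5), which follows from the uniform continuity of $t\mapsto t^p$ together with the definition of the Luxemburg-type norm on $V_p(\lambda)$, but must be stated carefully so that $\tau$ depends only on $\varepsilon$ and not on the particular sequence.
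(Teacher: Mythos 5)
Your proposal is correct and follows essentially the same route as the paper's proof (which itself adapts Cui--Hudzik): split $x$ into a head of finitely many blocks plus a modularly small tail, use coordinatewise convergence of the weakly null sequence to make the head of $x_n$ negligible, obtain the lower bound $I_p(x_n+x)\gtrsim 1+\varepsilon^p$ up to absorbable errors via Lemma~\ref{Lem 2.1}, and convert back to the norm. The only simplification worth noting is that in $V_p(\lambda)$ the norm is exactly $I_p(\cdot)^{1/p}$, so your steps (1) and (5) (the ``Luxemburg-type'' modular--norm comparison) are immediate, just as in the paper's final estimate.
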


\begin{proof}
Let $\varepsilon >0$ and $\varepsilon _{0}\in (0,\varepsilon )$. Also let $%
x\in X$ and $||x||\geq \varepsilon $. There exists $n_{1}\in \mathbb{N}$
such that 
\begin{equation*}
\dsum\limits_{n=n_{1}+1}^{\infty }\left( \frac{1}{\lambda _{n}}%
\dsum\limits_{i\in I_{n}}\left| x(i)\right| \right) ^{p}<\left( \frac{%
\varepsilon _{0}}{4}\right) ^{p}\text{.}
\end{equation*}
Hence we have 
\begin{equation*}
\left\| \dsum\limits_{i=n_{1}+1}^{\infty }x(i)e_{i}\right\| <\frac{%
\varepsilon _{0}}{4}\text{,}
\end{equation*}
where $e_{i}=(0,0,...,\overset{i\text{ }th}{1},0,0,...)$. Furthermore, we
have 
\begin{eqnarray*}
\varepsilon ^{p} &\leq &\dsum\limits_{n=1}^{n_{1}}\left( \frac{1}{\lambda
_{n}}\dsum\limits_{i\in I_{n}}\left| x(i)\right| \right)
^{p}+\dsum\limits_{n=n_{1}+1}^{\infty }\left( \frac{1}{\lambda _{n}}%
\dsum\limits_{i\in I_{n}}\left| x(i)\right| \right) ^{p} \\
&<&\dsum\limits_{n=1}^{n_{1}}\left( \frac{1}{\lambda _{n}}\dsum\limits_{i\in
I_{n}}\left| x(i)\right| \right) ^{p}+\left( \frac{\varepsilon _{0}}{4}%
\right) ^{p} \\
\varepsilon ^{p}-\left( \frac{\varepsilon _{0}}{4}\right) ^{p}
&<&\dsum\limits_{n=1}^{n_{1}}\left( \frac{1}{\lambda _{n}}\dsum\limits_{i\in
I_{n}}\left| x(i)\right| \right) ^{p}
\end{eqnarray*}
whence 
\begin{equation*}
\varepsilon ^{p}-\left( \frac{\varepsilon _{0}}{4}\right) ^{p}\leq
\dsum\limits_{n=1}^{n_{1}}\left( \frac{1}{\lambda _{n}}\dsum\limits_{i\in
I_{n}}\left| x(i)\right| \right) ^{p}\text{.}
\end{equation*}
Since $x_{m}(i)\rightarrow 0$ for $i=1,2,...$ \ , we choose any weakly null
sequences $\{x_{m}\}$ such that $\underset{m\rightarrow \infty }{\lim \inf }%
\left\| x_{m}\right\| \geq 1$. Then there exists $m_{0}\in \mathbb{N}$ such
that 
\begin{equation*}
\left\| \dsum\limits_{i=1}^{n_{1}}x_{m}(i)e_{i}\right\| <\frac{\varepsilon
_{0}}{4}
\end{equation*}
when $m>m_{0}$. Therefore, 
\begin{eqnarray*}
\left\| x_{m}+x\right\| &=&\left\| \dsum\limits_{i=1}^{n_{1}}\left(
x_{m}(i)+x(i)\right) e_{i}+\dsum\limits_{i=n_{1}+1}^{\infty }\left(
x_{m}(i)+x(i)\right) e_{i}\right\| \\
&\geq &\left\|
\dsum\limits_{i=1}^{n_{1}}x(i)e_{i}+\dsum\limits_{i=n_{1}+1}^{\infty
}x_{m}(i)e_{i}\right\| -\left\|
\dsum\limits_{i=1}^{n_{1}}x_{m}(i)e_{i}\right\| -\left\|
\dsum\limits_{i=n_{1}+1}^{\infty }x(i)e_{i}\right\| \\
&\geq &\left\|
\dsum\limits_{i=1}^{n_{1}}x(i)e_{i}+\dsum\limits_{i=n_{1}+1}^{\infty
}x_{m}(i)e_{i}\right\| -\frac{\varepsilon _{0}}{2}
\end{eqnarray*}
Moreover;

$\ \ \ \ \ \left\|
\dsum\limits_{i=1}^{n_{1}}x(i)e_{i}+\dsum\limits_{i=n_{1}+1}^{\infty
}x_{m}(i)e_{i}\right\| ^{p}$%
\begin{eqnarray*}
&& \\
\text{ \ \ \ \ \ \ \ \ \ } &=&\dsum\limits_{n=1}^{n_{1}}\left( \frac{1}{%
\lambda _{n}}\dsum\limits_{i\in I_{n}}\left| x(i)e_{i}\right| \right)
^{p}+\dsum\limits_{n=n_{1}+1}^{\infty }\left( \frac{1}{\lambda _{n}}%
\dsum\limits_{i\in I_{n}}\left| x_{m}(i)\right| \right) ^{p} \\
\text{ \ \ \ \ \ \ \ \ \ } &\geq &1+\varepsilon ^{p}-2\left( \frac{%
\varepsilon _{0}}{4}\right) ^{p}.
\end{eqnarray*}
Since $\left( 2\left( \frac{\varepsilon _{0}}{4}\right) ^{p}-1+\left(
1+\varepsilon _{0}\right) ^{p}\right) ^{\frac{1}{p}}>\varepsilon _{0}$ \ for 
$1<p<\infty ,$ we can choose$\ \varepsilon \geq \left( 2\left( \frac{%
\varepsilon _{0}}{4}\right) ^{p}-1+\left( 1+\varepsilon _{0}\right)
^{p}\right) ^{\frac{1}{p}}$ and we have

\begin{eqnarray*}
\left\Vert
\dsum\limits_{i=1}^{n_{1}}x(i)e_{i}+\dsum\limits_{i=n_{1}+1}^{\infty
}x_{m}(i)e_{i}\right\Vert &\geq &\left( 1+\varepsilon ^{p}-2\left( \frac{%
\varepsilon _{0}}{4}\right) ^{p}\right) ^{\frac{1}{p}} \\
&\geq &1+\varepsilon _{0}
\end{eqnarray*}%
Therefore, combining this result with the previous inequality, we get 
\begin{eqnarray*}
\left\Vert x_{m}+x\right\Vert &\geq &\left\Vert
\dsum\limits_{i=1}^{n_{1}}x(i)e_{i}+\dsum\limits_{i=n_{1}+1}^{\infty
}x_{m}(i)e_{i}\right\Vert -\frac{\varepsilon _{0}}{2} \\
&\geq &1+\varepsilon _{0}-\frac{\varepsilon _{0}}{2}=1+\frac{\varepsilon _{0}%
}{2}\text{.}
\end{eqnarray*}%
This means that $V_{p}(\lambda )$ has the uniform Opial property.
\end{proof}

From the Theorem \ref{Thm 2.2}, we get that $V_{p}(\lambda )$ is $(k-NUC)$.
Clearly $(k-NUC)$ Banach spaces are $(NUC)$, and $(NUC)$ implies property $%
(H)$ and reflexivity holds, \cite{Huff}. Also, Huff proved that $X$ is $%
(NUC) $ if and only if $X$ is reflexive and $(UKK)$ (see in \cite{Huff}).

On the other hand, it is well known that 
\begin{equation*}
(UC)\Rightarrow (kR)\Rightarrow (k+1)R\text{,}
\end{equation*}

\noindent and $(kR)$ spaces are reflexive and rotund, and it is easy to see
that 
\begin{equation*}
(k-NUC)\Rightarrow (kR).
\end{equation*}

By the facts presented in the introduction and the just above; we get the
following corollaries:

\begin{corollary}
The space $V_{p}(\lambda )$\ $(1<p<\infty )$\ is $(NUC)$ and then is
reflexive.
\end{corollary}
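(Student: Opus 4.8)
The plan is to obtain the corollary as a formal consequence of Theorem \ref{Thm 2.2} together with the structural implications between geometric properties recalled in Section \ref{sec2}. First I would apply Theorem \ref{Thm 2.2} with $k=2$ (indeed with any integer $k\geq 2$), which already tells us that $V_{p}(\lambda)$ is $(k\text{-}NUC)$ for $1<p<\infty$. Then I would invoke the remark placed immediately after the definition of $(k\text{-}NUC)$ in the preliminaries, namely that any Banach space which is $(k\text{-}NUC)$ for some integer $k\geq 2$ is necessarily $(NUC)$; this yields that $V_{p}(\lambda)$ is $(NUC)$.

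For the reflexivity claim I would use Huff's theorem \cite{Huff}, quoted in the introduction, that every nearly uniformly convex Banach space is reflexive (equivalently, $X$ is $(NUC)$ if and only if $X$ is reflexive and $(UKK)$). Since $V_{p}(\lambda)$, equipped with its norm, is a Banach space — this is part of the basic framework established in \cite{Nec-Ek-Vat} — and since it is $(NUC)$ by the previous step, Huff's result gives at once that $V_{p}(\lambda)$ is reflexive. As a byproduct, the same chain of implications ($(NUC)\Rightarrow (H)$, also due to Huff) shows that $V_{p}(\lambda)$ enjoys the Kadec--Klee property, although this is not needed for the present statement.

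I do not expect any genuine obstacle here: the corollary is purely a matter of chaining the main theorem with well-known implications. The only point deserving a brief remark is that the passage $(NUC)\Rightarrow$ reflexive is a statement about \emph{Banach} spaces, so one should note that $V_{p}(\lambda)$ is complete in its norm before applying it; this completeness, however, is already available from \cite{Nec-Ek-Vat}.
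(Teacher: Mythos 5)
Your proposal is correct and follows exactly the paper's route: Theorem \ref{Thm 2.2} gives $(k\text{-}NUC)$, the remark after the definition gives $(k\text{-}NUC)\Rightarrow(NUC)$, and Huff's theorem gives reflexivity. Your added note on the completeness of $V_{p}(\lambda)$ is a reasonable (and harmless) extra precaution, but otherwise the argument is the same as the paper's.
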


\begin{corollary}
The space $V_{p}(\lambda )$\ $(1<p<\infty )$\ is $(UKK)$.
\end{corollary}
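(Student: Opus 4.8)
The plan is to obtain $(UKK)$ as an immediate structural consequence of what has already been established, rather than through a fresh gliding-hump argument. By Theorem \ref{Thm 2.2} the space $V_{p}(\lambda )$ is $(k-NUC)$ for every integer $k\geq 2$; taking $k=2$ and using the implication $(k-NUC)\Rightarrow (NUC)$ recorded in Section \ref{sec2} (this is also the content of the preceding corollary), the space $V_{p}(\lambda )$ is $(NUC)$. Huff's theorem \cite{Huff} asserts that a Banach space $X$ is $(NUC)$ if and only if $X$ is reflexive and has the $(UKK)$ property; the ``only if'' direction, applied to $X=V_{p}(\lambda )$, yields at once that $V_{p}(\lambda )$ is $(UKK)$. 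Thus the proof is essentially a one-line deduction, and the only thing to check is that the hypotheses of Huff's theorem hold — which is precisely the content of Theorem \ref{Thm 2.2} together with the elementary chain $(k-NUC)\Rightarrow (NUC)$.

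For completeness one could also argue directly, in the spirit of the proof of Theorem \ref{Thm 2.3}. Fix $\varepsilon >0$ and let $\{x_{n}\}\subset S(V_{p}(\lambda ))$ be an $\varepsilon $-separated sequence with $x_{n}\rightarrow x$ weakly. Since each coordinate functional $x\mapsto x(i)$ is bounded on $V_{p}(\lambda )$ (indeed $|x(i)|\leq \lambda _{i}\left\| x\right\| $, using $i\in I_{i}$), weak convergence forces $x_{n}(i)\rightarrow x(i)$ for every $i$, so $x_{n}-x\rightharpoonup 0$. Pick $n_{1}$ so that the tail $\sum_{n>n_{1}}\left( \frac{1}{\lambda _{n}}\sum_{i\in I_{n}}|x(i)|\right) ^{p}$ is sufficiently small; since any window $I_{n}=[n-\lambda _{n}+1,n]$ with $n\leq n_{1}$ is contained in $[1,n_{1}]$, this controls $\left\| \sum_{i>n_{1}}x(i)e_{i}\right\| $. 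Next, using coordinatewise convergence on the finite-dimensional space spanned by $e_{1},\dots ,e_{n_{1}}$, make $\left\| \sum_{i=1}^{n_{1}}x_{n}(i)e_{i}\right\| $ small for all large $n$; then combine the separation hypothesis with the inequality $|a+b|^{p}\leq 2^{p-1}(|a|^{p}+|b|^{p})$ and Lemma \ref{Lem 2.1} to bound $\left\| x\right\| ^{p}=I_{p}(x)$ below $1$ by a quantity depending only on $\varepsilon $ and $p$. The one delicate point here is the overlap of the windows $I_{n}$ (a single coordinate may lie in infinitely many of them), so truncations should be estimated through the norm of the truncated vector — the truncation to $[1,n_{1}]$ lies in a finite-dimensional subspace, while the truncation to $(n_{1},\infty )$ only meets windows $I_{n}$ with $n>n_{1}$ — rather than termwise in the modular.

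Since all the substantive work is already done in Theorem \ref{Thm 2.2}, I would present the short proof via Huff's characterization as the official argument; the direct route merely reruns the bookkeeping from Theorem \ref{Thm 2.3} and introduces no genuinely new difficulty. The ``main obstacle'' is therefore essentially cosmetic: invoking the equivalence $(NUC)\Leftrightarrow \text{reflexive}\ +\ (UKK)$ in the correct direction and confirming that $(k-NUC)\Rightarrow (NUC)$ has indeed been verified for the relevant $k$.
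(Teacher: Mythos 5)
Your official argument is exactly the paper's route: the corollary is deduced from Theorem \ref{Thm 2.2} via $(k-NUC)\Rightarrow (NUC)$ and Huff's characterization of $(NUC)$ as reflexivity plus $(UKK)$, which is precisely the justification the paper gives. The proposal is correct and essentially identical to the paper's reasoning; the supplementary direct sketch is not needed.
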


\begin{corollary}
The space $V_{p}(\lambda )$\ $(1<p<\infty )$\ is $(kR)$.
\end{corollary}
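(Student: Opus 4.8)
The plan is to obtain the corollary as an immediate consequence of Theorem~\ref{Thm 2.2} together with the implications collected in Section~\ref{sec2}. By Theorem~\ref{Thm 2.2}, for every integer $k\geq 2$ the space $V_{p}(\lambda)$ is $(k-NUC)$ whenever $1<p<\infty$; since $(k-NUC)\Rightarrow (kR)$, as recorded in the Preliminaries, it follows that $V_{p}(\lambda)$ is $(kR)$, which is the assertion.

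For completeness I would also recall why $(k-NUC)$ implies $(kR)$. Let $\{x_{n}\}\subset B(V_{p}(\lambda))$ satisfy $\left\Vert x_{n_{1}}+x_{n_{2}}+\cdots +x_{n_{k}}\right\Vert \rightarrow k$ as $n_{1},n_{2},\dots ,n_{k}\rightarrow \infty$, and suppose, towards a contradiction, that $\{x_{n}\}$ is not convergent. Then $\{x_{n}\}$ fails to be Cauchy, so there are $\varepsilon>0$ and a subsequence $\{y_{j}\}$ of $\{x_{n}\}$ with $sep\left(\{y_{j}\}\right)>\varepsilon$; moreover every tail $\{y_{j}\}_{j\geq N}$ is still $\varepsilon$-separated. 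Applying the $(k-NUC)$ property to such a tail gives a $\delta>0$ (depending only on $\varepsilon$) and indices $j_{1},\dots ,j_{k}\geq N$ with $\left\Vert y_{j_{1}}+\cdots +y_{j_{k}}\right\Vert <k(1-\delta)$. For $N$ large enough these indices also lie beyond the threshold produced by the convergence $\left\Vert x_{n_{1}}+\cdots +x_{n_{k}}\right\Vert \rightarrow k$, which forces $\left\Vert y_{j_{1}}+\cdots +y_{j_{k}}\right\Vert$ to be close to $k$; this is the desired contradiction, so $\{x_{n}\}$ converges and $V_{p}(\lambda)$ is $(kR)$.

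I expect no genuine obstacle: the corollary is a formal consequence of Theorem~\ref{Thm 2.2}, the only point needing a little care being that the $(k-NUC)$ estimate must be available arbitrarily far along the separated subsequence, which is precisely why one passes to tails (any tail of an $\varepsilon$-separated sequence is again $\varepsilon$-separated). Alternatively, one may simply invoke Theorem~\ref{Thm 2.2} and cite the implication $(k-NUC)\Rightarrow (kR)$ from \cite{Kutzarova}.
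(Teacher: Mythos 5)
Your main derivation is exactly the paper's: the corollary is stated without proof, as an immediate consequence of Theorem~\ref{Thm 2.2} combined with the implication $(k-NUC)\Rightarrow (kR)$ recorded in the Preliminaries (and attributed to \cite{Kutzarova}), so on that route you and the paper coincide. A caution about your ``for completeness'' sketch of $(k-NUC)\Rightarrow (kR)$: the step ``$\{x_{n}\}$ fails to be Cauchy, so there are $\varepsilon>0$ and a subsequence $\{y_{j}\}$ with $sep(\{y_{j}\})>\varepsilon$'' is not valid as stated. Failure of the Cauchy condition only produces, for some $\varepsilon>0$, infinitely many \emph{pairs} of terms at distance $\geq\varepsilon$; it does not by itself yield an infinite subsequence whose terms are \emph{pairwise} $\varepsilon$-separated (consider a sequence whose terms alternate between two fixed points: it is not Cauchy, yet any infinite subsequence contains equal terms, so its separation is $0$). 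To repair the sketch one must split into cases: if the range of $\{x_{n}\}$ is not totally bounded, an $\varepsilon$-separated subsequence exists and your tail argument goes through; if it is totally bounded, some subsequence converges and one still has to show the whole sequence converges to that limit (using the hypothesis $\Vert x_{n_{1}}+\cdots +x_{n_{k}}\Vert\rightarrow k$ together with further convexity information), which is precisely the nontrivial part of Kutzarova's argument. Since you also offer the alternative of simply citing \cite{Kutzarova}, as the paper does, the corollary itself is fine; just do not present the sketch as a complete proof of the implication.
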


\begin{corollary}
The space $V_{p}(\lambda )$\ $(1<p<\infty )$\ is rotund.
\end{corollary}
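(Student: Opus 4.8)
The plan is to offer two routes and write up the second. The short route: Theorem \ref{Thm 2.2} says $V_p(\lambda)$ is $(k-NUC)$ for every integer $k\ge 2$, hence (as recorded in the discussion preceding the corollaries) $(kR)$, and $(kR)$ spaces are rotund; indeed this is already the content of the immediately preceding corollary, so the statement is a formal consequence of what has been proved. I would nonetheless also record the following direct argument, which exhibits explicitly why rotundity holds and pinpoints where $1<p<\infty$ is used.

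By the usual reformulation of rotundity, it suffices to prove that $x,y\in S(V_p(\lambda))$ with $\|x+y\|=2$ forces $x=y$. Write $\sigma_n(x)=\frac{1}{\lambda_n}\sum_{i\in I_n}|x(i)|$, so that $\|x\|=I_p(x)^{1/p}=\big(\sum_{n=1}^{\infty}\sigma_n(x)^p\big)^{1/p}$ is exactly the $\ell_p$-norm of the nonnegative sequence $(\sigma_n(x))_n$ (cf. the proof of Lemma \ref{Lem 2.1}). First I would run the chain
\[
2=\|x+y\|=\Big(\sum_n\sigma_n(x+y)^p\Big)^{1/p}\le\Big(\sum_n(\sigma_n(x)+\sigma_n(y))^p\Big)^{1/p}\le\Big(\sum_n\sigma_n(x)^p\Big)^{1/p}+\Big(\sum_n\sigma_n(y)^p\Big)^{1/p}=2,
\]
where the first inequality is $|x(i)+y(i)|\le|x(i)|+|y(i)|$ summed over $I_n$, and the second is Minkowski's inequality in $\ell_p$. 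Hence both inequalities are in fact equalities.

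Next I would read off the two equality cases. Equality in Minkowski's inequality in $\ell_p$ with $1<p<\infty$, applied to the two nonzero nonnegative sequences $(\sigma_n(x))_n$ and $(\sigma_n(y))_n$ of equal norm, forces $\sigma_n(x)=\sigma_n(y)$ for all $n$. Equality in the first step, together with the fact that $0\le a_n\le b_n$ and $\sum a_n^p=\sum b_n^p<\infty$ imply $a_n=b_n$ for all $n$, forces $\sigma_n(x+y)=\sigma_n(x)+\sigma_n(y)$ for every $n$; hence $|x(i)+y(i)|=|x(i)|+|y(i)|$ for every $i\in I_n$, and since $i\in I_i$ for every $i$, this holds for all $i\in\mathbb{N}$, so for each $i$ the real numbers $x(i)$ and $y(i)$ have the same sign (one of them possibly zero). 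Finally I would close by induction on $i$, using that $n$ is the \emph{largest} index occurring in $I_n=[n-\lambda_n+1,n]$: since $\lambda_1=1$ gives $I_1=\{1\}$, the relation $\sigma_1(x)=\sigma_1(y)$ reads $|x(1)|=|y(1)|$, so $x(1)=y(1)$; and if $x(j)=y(j)$ for all $j<n$, then every index of $I_n$ other than $n$ lies in $\{1,\dots,n-1\}$, so in $\lambda_n\sigma_n(x)=\sum_{j\in I_n}|x(j)|$ all summands except $|x(n)|$ already equal the corresponding summands for $y$; then $\sigma_n(x)=\sigma_n(y)$ gives $|x(n)|=|y(n)|$, and the sign condition gives $x(n)=y(n)$. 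Thus $x=y$, i.e. $V_p(\lambda)$ is rotund.

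The one genuinely delicate point is this concluding induction, and it rests entirely on the shape of the de la Vall\'{e}e-Poussin windows: $n$ is always the right-hand endpoint of $I_n$, which is what allows the coordinate identities $\sigma_n(x)=\sigma_n(y)$ to be unzipped one index at a time, with $\lambda_1=1$ supplying a singleton base case. The remaining ingredients are routine: passing from $|x(n)|=|y(n)|$ to $x(n)=y(n)$ uses the sign information, and the identity $\sigma_n(x)=\sigma_n(y)$ is precisely the place where strict convexity of $\ell_p$ — that is, $1<p<\infty$ — enters, since for $p=1$ that step is vacuous and rotundity can fail.
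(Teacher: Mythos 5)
Your proposal is correct, and it contains two routes. The short route is precisely the paper's: the corollary is given there without any separate argument, as a formal consequence of Theorem \ref{Thm 2.2} combined with the implications $(k\text{-}NUC)\Rightarrow (kR)$ and ``$(kR)$ spaces are reflexive and rotund'' recorded just before the corollaries (it also follows from the preceding corollary that $V_p(\lambda)$ is $(kR)$). Your direct argument is a genuinely different and, as far as I can check, complete alternative: the reformulation of rotundity you use ($x,y\in S(X)$, $\|x+y\|=2\Rightarrow x=y$) is equivalent to the paper's extreme-point definition; the identification $\|x\|=\bigl(\sum_n \sigma_n(x)^p\bigr)^{1/p}$ agrees with the norm implicitly used in Lemma \ref{Lem 2.1} and Theorem \ref{Thm 2.2}; the equality case of Minkowski in $\ell_p$ for $1<p<\infty$ applied to the nonzero sequences $(\sigma_n(x))$, $(\sigma_n(y))$ of equal norm does give $\sigma_n(x)=\sigma_n(y)$; the termwise argument from $0\le a_n\le b_n$ with $\sum a_n^p=\sum b_n^p$ gives $\sigma_n(x+y)=\sigma_n(x)+\sigma_n(y)$ and hence, since $i\in I_i$, the sign compatibility at every coordinate; and the concluding induction is sound because $\lambda_1=1$ gives $I_1=\{1\}$ and the conditions $\lambda_1=1$, $\lambda_{k+1}\le\lambda_k+1$ force $\lambda_n\le n$, so $I_n\subseteq\{1,\dots,n\}$ with right endpoint exactly $n$ (a hypothesis you use implicitly and could state). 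The trade-off: the paper's route is a one-line deduction but rests on the much heavier Theorem \ref{Thm 2.2} and the general machinery $(k\text{-}NUC)\Rightarrow(kR)\Rightarrow$ rotund, whereas your direct proof is elementary and self-contained, pinpoints exactly where $1<p<\infty$ enters (strict convexity of $\ell_p$ via the Minkowski equality case, which indeed fails at $p=1$), and isolates the structural feature of the de la Vall\'ee-Poussin windows (right endpoint $n$, singleton base case) that makes the coordinatewise ``unzipping'' work.
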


\end{document}